\numberwithin{equation}{section}
\newtheorem{theorem}{Theorem}
\newtheorem{lemma}[theorem]{Lemma}
\newtheorem{proposition}[theorem]{Proposition}
\theoremstyle{definition}
\DeclareMathOperator{\length}{length}
\newcommand{\C}{\mathscr{C}}
\newcommand{\sz}{\tiny}
\title{Compositions colored by simplicial polytopic numbers}
\author{Daniel Birmajer}
\address{Department of Mathematics\\ Nazareth College\\ 4245 East Ave.\\ Rochester, NY 14618}
\author{Juan B. Gil}
\address{Penn State Altoona\\ 3000 Ivyside Park\\ Altoona, PA 16601}
\author{Michael D. Weiner}
\begin{document}
\maketitle

\begin{abstract}
For a given integer $d\ge 1$, we consider $\binom{n+d-1}{d}$-color compositions of a positive integer $\nu$ for which each part of size $n$ admits $\binom{n+d-1}{d}$ colors. We give explicit formulas for the enumeration of such compositions, generalizing existing results for $n$-color compositions (case $d=1$) and $\binom{n+1}{2}$-color compositions (case $d=2$). In addition, we give bijections from the set of $\binom{n+d-1}{d}$-color compositions of $\nu$ to the set of compositions of $(d+1)\nu - 1$ having only parts of size $1$ and $d+1$, the set of compositions of $(d+1)\nu$ having only parts of size congruent to $1$ modulo $d+1$, and the set of compositions of $(d+1)\nu + d$ having no parts of size less than $d+1$. Our results rely on basic properties of partial Bell polynomials and on a suitable adaptation of known bijections for $n$-color compositions.
\end{abstract}

\section{Introduction}

A composition of a positive integer $\nu$ is an ordered $k$-tuple $(j_1, \dotsc, j_k)$, for $k \ge 1$, of positive integers called parts such that $j_1 + \dotsb + j_k = \nu$. We call $k$ the number of parts. 

Given a sequence of nonnegative integers $w=(w_n)_{n\in\mathbb{N}}$, we define a $w$-color composition of $\nu$ to be a composition of $\nu$ such that part $n$ can take on $w_n$ colors. If $w_{n} = 0$, it means that we do not use the integer $n$ in the composition. Such colored (weighted) compositions have been considered by many authors, starting as early as 1960 (maybe even earlier), and they continue to be of current research interest, see e.g.\ \cite{ABCM14,Eger15a,Eger15b,HM10}.

If we let $W_n$ be the number of $w$-color compositions of $n$, Moser and Whitney \cite{MW61} observed that the corresponding generating functions $w(t)=\sum_{n=1}^\infty w_n t^n$ and $W(t)=\sum_{n=1}^\infty W_n t^n$ satisfy the relation
\[ W(t) = \frac{w(t)}{1-w(t)}, \text{ or equivalently, }\; 1+W(t) = \frac{1}{1-w(t)}. \]
In other words, the sequence $(W_n)_{n\in\mathbb{N}}$ is the {\sc invert} transform of $(w_n)_{n\in\mathbb{N}}$, see \cite{BernsteinSloane}.

A refinement of this formula, considering summations over all weighted compositions of $n$ with exactly $k$ parts, was given by Hoggatt and Lind \cite{HL68}. They showed that the number of weighted compositions of $n$ with exactly $k$ parts is given by
\begin{equation*}
  c_{n,k}(w)=\sum_{\pi_k(n)} \frac{k!}{k_1!\cdots k_n!}\, w_1^{k_1}\cdots w_n^{k_n},
\end{equation*}
where the sum runs over all $k$-part partitions of $n$, that is, over all solutions of 
\[ k_1+2k_2+\cdots+nk_n=n \text{ such that } k_1+\cdots+k_n=k \text{ and } k_j\in\mathbb{N}_0 \text{ for all } j. \]
Note that $c_{n,k}(w)$ is precisely the partial (exponential) Bell polynomial $B_{n,k}(1!w_1, 2!w_2,\dots)$ multiplied by a factor $k!/n!$. Given the fact that the invert transform may be written in terms of partial Bell polynomials (via Fa{\`a} di Bruno's formula), the aforementioned result can be formulated as follows. 

\begin{theorem} \label{thm:coloredComp}
Let $w=(w_n)_{n\in\mathbb{N}}$ be a sequence in $\mathbb{N}_0=\mathbb{N}\cup\{0\}$. Then its invert transform \begin{equation*}
  W_n = \sum_{k=1}^n \frac{k!}{n!} B_{n, k} (1!w_1, 2! w_2, \dots) \;\text{ for } n\ge 1,
\end{equation*}
counts the number of $w$-color compositions of $n$. Moreover, $\frac{k!}{n!} B_{n, k} (1!w_1, 2! w_2, \dots)$ gives the number of such compositions made with exactly $k$ parts.
\end{theorem}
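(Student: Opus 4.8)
The plan is to prove the ``Moreover'' clause first---that $\frac{k!}{n!}B_{n,k}(1!w_1,2!w_2,\dots)$ enumerates the $w$-color compositions of $n$ having exactly $k$ parts---and then to recover the formula for $W_n$ by summing over $k$, since every $w$-color composition of $n$ uses between $1$ and $n$ parts.

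For the count with $k$ parts I would argue directly. A $w$-color composition of $n$ with $k$ parts consists of an ordered tuple of part sizes $(j_1,\dots,j_k)$ together with a choice of color for each part. Grouping these compositions by their multiplicity vector $(k_1,\dots,k_n)$, where $k_m$ is the number of parts equal to $m$, the conditions $k_1+\cdots+k_n=k$ and $k_1+2k_2+\cdots+nk_n=n$ are exactly the constraints defining the summation index $\pi_k(n)$. For a fixed multiplicity vector there are $\frac{k!}{k_1!\cdots k_n!}$ orderings of the sizes, and for each ordering there are $\prod_{m}w_m^{k_m}$ colorings, because the $k_m$ parts of size $m$ each independently admit $w_m$ colors. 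Multiplying and summing over $\pi_k(n)$ reproduces precisely the Hoggatt--Lind quantity $c_{n,k}(w)$.

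It remains to identify $c_{n,k}(w)$ with the Bell polynomial, a step I expect to be purely mechanical. Using the standard expansion
\[ B_{n,k}(x_1,x_2,\dots)=\sum_{\pi_k(n)}\frac{n!}{k_1!\cdots k_n!}\Big(\frac{x_1}{1!}\Big)^{k_1}\cdots\Big(\frac{x_n}{n!}\Big)^{k_n}, \]
the substitution $x_j=j!\,w_j$ collapses each factor $(x_j/j!)^{k_j}$ to $w_j^{k_j}$, so that $B_{n,k}(1!w_1,2!w_2,\dots)=\frac{n!}{k!}\,c_{n,k}(w)$ term by term. Rearranging gives the claimed identity, and summing $c_{n,k}(w)$ over $k=1,\dots,n$ yields $W_n$.

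I do not anticipate a genuine obstacle here; the entire difficulty is bookkeeping, namely aligning the combinatorial indexing with the multi-index appearing in the Bell polynomial. As an alternative that makes direct contact with the invert transform, the same result follows from Fa\`a di Bruno's formula applied to $f(w(t))$ with $f(x)=(1-x)^{-1}$: since $f^{(k)}(0)=k!$ and $w^{(j)}(0)=j!\,w_j$, extracting the coefficient of $t^n$ in $1+W(t)=f(w(t))$ gives $W_n=\frac{1}{n!}\sum_{k\ge1}f^{(k)}(0)\,B_{n,k}(1!w_1,2!w_2,\dots)=\sum_{k=1}^{n}\frac{k!}{n!}B_{n,k}(1!w_1,2!w_2,\dots)$. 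The only point needing care in that route is that the constant term $f(0)=1$ accounts for the summand $1$ in $1+W(t)$ and does not affect $W_n$ for $n\ge1$.
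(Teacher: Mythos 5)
Your proposal is correct and follows essentially the same route as the paper, which justifies the theorem by combining the Hoggatt--Lind count $c_{n,k}(w)=\sum_{\pi_k(n)}\frac{k!}{k_1!\cdots k_n!}w_1^{k_1}\cdots w_n^{k_n}$ with the observation that this equals $\frac{k!}{n!}B_{n,k}(1!w_1,2!w_2,\dots)$ and with the Fa\`a di Bruno interpretation of the invert transform. The only difference is that you supply the multiplicity-vector counting argument explicitly where the paper simply cites \cite{HL68} and \cite{MW61}.
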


This way of looking at colored compositions was recently discussed in \cite{ABCM14} with a slightly different notation since they used the ordinary Bell polynomials instead of the exponential Bell polynomials considered here. In op.~cit., the authors used this viewpoint to revisit some known examples of colored and restricted compositions.

In \cite{Eger15b}, Eger used the fact that $B_{n, k} (1!w_1, 2! w_2, \dots) = \frac{n!}{k!} c_{n,k}(w)$ to derive identities for partial Bell polynomials from identites for weighted compositions. In Eger's work, the quantity $c_{n,k}(w)$ is denoted by $\binom{k}{n}_f$, where $f$ is the weight function $f(i)=w_i$.

\medskip
In this paper, we study families of integer compositions colored by the {\it simplicial polytopic numbers} given by the sequences 
\begin{equation*}
 p(d)=\left\{\tbinom{n+d-1}{d};\; n\in\mathbb{N}\right\} \text{ for } d\in\mathbb{N}. 
\end{equation*}

In Section~\ref{sec:enumeration}, we derive explicit formulas for the enumeration of $p(d)$-color compositions of $n$, denoted by $P_n(d)$, and for the sets of restricted compositions:
\begin{align*}
\C_{1,m}(n) &= \{\text{compositions of $n$ having only parts of size $1$ and $m$}\}, \\
\C_{\equiv 1(m)}(n) &= \{\text{compositions of $n$ having only parts of size $\equiv 1$ modulo $m$}\}, \\
\C_{\ge m}(n) &= \{\text{compositions of $n$ having no parts of size less than $m$}\},
\end{align*}
where $m$ is an arbitrary integer greater than 1. In particular, we obtain the identity
\begin{equation}\label{eq:equalsize}
 P_\nu(d) = \left|\C_{1,d+1}\big((d+1)\nu - 1\big)\right| = \left|\C_{\equiv 1(d+1)}\big((d+1)\nu\big)\right| 
 = \left|\C_{\ge d+1}\big((d+1)\nu + d\big)\right|,
\end{equation}
which implies that there is a one-to-one correspondence between the sets involved. 

Finally, in Section~\ref{sec:bijections}, we provide combinatorial proofs of the identity \eqref{eq:equalsize} by suitably modifying some of the bijections given by Shapcott \cite{Shapcott13}.

The results presented in this note provide a natural generalization of what is known for the set of $n$-color compositions (case $d=1$) and its bijections to the set of 1-2 compositions (denoted here by $\C_{1,2}$), the set of odd compositions ($\C_{\equiv 1(2)}$), and the set of 1-free compositions ($\C_{\ge 2}$).

\section{Enumeration formulas}
\label{sec:enumeration}

As mentioned in the introduction, some properties of partial Bell polynomials can be formulated as properties of colored compositions and vice-versa. For instance, the known recurrence (cf.\ \cite[Eq.\ (3k), Sec.~3.3]{Comtet})
\begin{equation*}
 B_{n, k}(1! w_1, 2! w_2, \dots)  
 = \frac1{k} \sum_{j=1}^{n-k+1} \binom{n}{j} j!w_j\,  B_{n-j, k-1}(1! w_1, 2! w_2, \dots)
\end{equation*}
is equivalent to the identity
\begin{equation}\label{eq:Vandermonde}
 c_{n,k}(w) = \sum_{j= 1}^{n-k+1} w_j c_{n-j,k-1}(w)
\end{equation}
for colored compositions. In \cite{Eger15a}, this formula is referred to as Vandermonde convolution. 

The main contribution of this section is the following proposition on the enumeration of compositions colored by the simplicial polytopic numbers.

\begin{proposition} \label{prop:p(d)}
For $d\in\mathbb{N}$ let $p(d)=\big(p_n(d)\big)_{n\in\mathbb{N}}$ be the sequence of simplicial $d$-polytopic numbers $p_n(d) =\binom{n+d-1}{d}$. Then the number of $p(d)$-color compositions of $n$ is given by
\begin{equation*}
 P_n(d) =  \sum_{k=1}^n \binom{n+dk-1}{n-k},
\end{equation*}
and $\binom{n+dk-1}{n-k}$ gives the number of such compositions having exactly $k$ parts.
\end{proposition}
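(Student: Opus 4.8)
The plan is to apply Theorem~\ref{thm:coloredComp} directly with the specific sequence $w_n = p_n(d) = \binom{n+d-1}{d}$ and to identify the resulting partial Bell polynomials in closed form. By that theorem, the number of $p(d)$-color compositions of $n$ with exactly $k$ parts equals $\frac{k!}{n!}B_{n,k}(1!\,p_1(d), 2!\,p_2(d),\dots)$, so the entire proposition reduces to establishing the single identity
\begin{equation*}
  \frac{k!}{n!}\,B_{n,k}\big(1!\,p_1(d),\, 2!\,p_2(d),\dots\big) = \binom{n+dk-1}{n-k}.
\end{equation*}
Summing over $k$ from $1$ to $n$ then yields the formula for $P_n(d)$, and the ``exactly $k$ parts'' refinement is immediate from the per-$k$ identity.

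First I would translate the Bell-polynomial statement into a generating-function statement. The standard tool is the exponential-generating-function characterization of partial Bell polynomials: if $\phi(t) = \sum_{n\ge 1} a_n \frac{t^n}{n!}$, then $\frac{\phi(t)^k}{k!} = \sum_{n\ge k} B_{n,k}(a_1,a_2,\dots)\frac{t^n}{n!}$. Here $a_n = n!\,p_n(d) = n!\binom{n+d-1}{d}$, so $a_n \frac{t^n}{n!} = \binom{n+d-1}{d}t^n$ and hence $\phi(t) = \sum_{n\ge 1}\binom{n+d-1}{d}t^n$. The key computation is to evaluate this ordinary-style sum in closed form. The plan is to recognize $\sum_{n\ge 1}\binom{n+d-1}{d}t^n$ as $\frac{t}{(1-t)^{d+1}}$; this follows from the negative-binomial expansion $\frac{1}{(1-t)^{d+1}} = \sum_{m\ge 0}\binom{m+d}{d}t^m$ after reindexing $m = n-1$. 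The factor $\frac{k!}{n!}$ exactly cancels the $\frac{1}{k!}$ and converts the exponential normalization $B_{n,k}\frac{t^n}{n!}$ back to an ordinary coefficient, so $\frac{k!}{n!}B_{n,k}(a_1,\dots)$ is precisely $[t^n]\,\phi(t)^k$.

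It then remains to extract $[t^n]$ of
\begin{equation*}
  \phi(t)^k = \frac{t^k}{(1-t)^{(d+1)k}},
\end{equation*}
which is the heart of the argument. I would factor out $t^k$ to reduce to extracting $[t^{n-k}]$ of $(1-t)^{-(d+1)k}$, and apply the negative-binomial expansion once more: $[t^{n-k}]\,(1-t)^{-(d+1)k} = \binom{(n-k)+(d+1)k-1}{n-k} = \binom{n+dk-1}{n-k}$, which is exactly the claimed formula. Assembling these pieces gives the per-$k$ count, and summation over $k$ completes the proof.

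I expect the main obstacle to be bookkeeping rather than conceptual: one must be careful to match the exponential normalization of the Bell polynomials (the $\frac{t^n}{n!}$ weights) against the ordinary generating function $\frac{t}{(1-t)^{d+1}}$, and to track the two applications of the negative-binomial coefficient identity with their shifted indices so that the upper parameter lands on $n+dk-1$ and the lower on $n-k$. The only genuine input beyond Theorem~\ref{thm:coloredComp} is the generating-function formula $\frac{\phi(t)^k}{k!} = \sum_n B_{n,k}\frac{t^n}{n!}$, which is a basic and standard property of partial Bell polynomials of the kind the abstract promises to rely on.
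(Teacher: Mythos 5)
Your proposal is correct, but it takes a genuinely different (and shorter) route than the paper. The paper proves the key identity $B_{n,k}(1!\,p_1(d),2!\,p_2(d),\dots)=\frac{n!}{k!}\binom{n+dk-1}{n-k}$ by induction on $k$: it invokes the recurrence $B_{n,k}=\frac1k\sum_j\binom{n}{j}\,j!\,p_j(d)\,B_{n-j,k-1}$ from Comtet, reindexes, rewrites the binomials with negative upper arguments, and closes the convolution with a Vandermonde identity to land on $\binom{-dk-k}{n-k}=(-1)^{n-k}\binom{n+dk-1}{n-k}$. You instead use the generating-function characterization $\sum_n B_{n,k}(a_1,a_2,\dots)\frac{t^n}{n!}=\frac{\phi(t)^k}{k!}$ with $\phi(t)=\sum_{n\ge1}\binom{n+d-1}{d}t^n=\frac{t}{(1-t)^{d+1}}$, so that $\frac{k!}{n!}B_{n,k}=[t^n]\frac{t^k}{(1-t)^{(d+1)k}}=\binom{n+dk-1}{n-k}$ in one step. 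The two arguments are morally equivalent---the paper's Vandermonde convolution is exactly the coefficient-level shadow of your product of power series---but your version dispenses with the induction and the sign bookkeeping entirely, at the cost of invoking the exponential-generating-function identity for $B_{n,k}$ rather than the recurrence; both are standard facts from the same section of Comtet, and your normalization check ($\frac{k!}{n!}$ converting the exponential weight back to an ordinary coefficient) is handled correctly. Your per-$k$ identity immediately yields both the refined count and, after summing over $k$, the formula for $P_n(d)$, just as in the paper.
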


\begin{proof}
By Theorem~\ref{thm:coloredComp}, we just need to verify the identity
\begin{equation*}
  c_{n,k}(p(d)) = \frac{k!}{n!}B_{n, k}(1! p_1(d), 2! p_2(d), \dots) = \binom{n+dk-1}{n-k},
\end{equation*}
which we will prove by induction on $k$. For $k=1$ and all $n$ we have 
\begin{equation*}
  c_{n, 1}(p(d)) = \frac{1!}{n!} n!p_n(d) = \binom{n+d-1}{d} = \binom{n+d-1}{n-1}.
\end{equation*}
For $k>1$, we use \eqref{eq:Vandermonde} and the inductive step to get
\begin{align*}
 c_{n,k}(p(d))  
 &= \sum_{j= 1}^{n-k+1} p_j(d) c_{n-j,k-1}(p(d)) \\
 &= \sum_{j= 1}^{n-k+1} \binom{j+d-1}{j-1}  \binom{n-j+d(k-1)-1}{n-j-k+1} \\
 &= \sum_{j= 0}^{n-k} \binom{j+d}{j} \binom{n-j+d(k-1)-2}{n-k-j} \\
 &= (-1)^{n-k}\sum_{j= 0}^{n-k} \binom{-d-1}{j} \binom{-d(k-1)-k+1}{n-k-j} \\
 &= (-1)^{n-k} \binom{-dk-k}{n-k} = \binom{n + dk - 1}{n-k}.
\end{align*}
\end{proof}

As discussed in the introduction, in addition to the $p(d)$-color compositions, we are also interested in the sets of restricted compositions $\C_{1,m}(n)$, $\C_{\equiv 1(m)}(n)$, and $\C_{\ge m}(n)$. 
\begin{proposition} \label{prop:1_and_m}
Let $m>1$ be an integer. Then
\begin{equation*}
  \big|\C_{1,m}(n)\big| = \sum_{j=0}^{\lfloor \frac{n}{m}\rfloor} \binom{n-(m-1)j}{j} \;\text{ for } n\ge m.
\end{equation*}
\end{proposition}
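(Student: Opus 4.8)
The plan is to count compositions of $n$ into parts of size $1$ and $m$ directly, organizing the count by the number of parts equal to $m$. I would let $j$ denote the number of parts of size $m$ in a composition of $n$ using only $1$'s and $m$'s. Since the parts sum to $n$, the remaining weight $n - mj$ must be made up entirely of $1$'s, so there are exactly $n - mj$ parts of size $1$. For this to be admissible we need $n - mj \ge 0$, i.e.\ $0 \le j \le \lfloor n/m \rfloor$, which explains the summation range.

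First I would fix $j$ and count how many such compositions have exactly $j$ parts equal to $m$. A composition is an \emph{ordered} sequence of parts, so I must count the number of ways to interleave the $j$ copies of $m$ among the $n - mj$ copies of $1$. The total number of parts is $(n - mj) + j = n - (m-1)j$, and a composition is completely determined by choosing which of these positions hold the part $m$. Hence the count for fixed $j$ is
\begin{equation*}
  \binom{n-(m-1)j}{j}.
\end{equation*}

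Finally I would sum over all admissible values of $j$. Summing the per-$j$ count over $j = 0, 1, \dotsc, \lfloor n/m \rfloor$ yields
\begin{equation*}
  \big|\C_{1,m}(n)\big| = \sum_{j=0}^{\lfloor n/m\rfloor} \binom{n-(m-1)j}{j},
\end{equation*}
which is exactly the claimed formula. The hypothesis $n \ge m$ guarantees that the sum is nonempty beyond the trivial $j=0$ term, though the identity in fact holds for all $n \ge 1$.

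I do not anticipate a serious obstacle here, since the argument is an elementary bijective count. The only point requiring slight care is confirming that the binomial coefficient correctly counts ordered interleavings rather than unordered selections: the key observation is that, once the multiset of parts is fixed (namely $j$ parts equal to $m$ and $n-mj$ parts equal to $1$), distinct compositions correspond precisely to distinct arrangements of these parts in a row, and the number of such arrangements equals the number of ways to choose the $j$ positions for the $m$'s among the $n-(m-1)j$ total positions. This confirms the formula without any generating-function machinery, though one could alternatively verify it by extracting coefficients from $1/(1 - t - t^m)$.
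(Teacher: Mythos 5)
Your proof is correct: fixing the number $j$ of parts equal to $m$ forces exactly $n-mj$ parts equal to $1$, and the $\binom{n-(m-1)j}{j}$ interleavings of these $n-(m-1)j$ parts are in bijection with the compositions counted, so summing over $0\le j\le\lfloor n/m\rfloor$ gives the formula. However, your route is genuinely different from the paper's. The paper does not count directly; it applies Theorem~\ref{thm:coloredComp} with the weight sequence $w_1=w_m=1$, $w_j=0$ otherwise, and then evaluates the resulting partial Bell polynomials $B_{n,k}(1!,0,\dots,m!,0,\dots)$ using Comtet's convolution and specialization identities, followed by a reindexing $n=q(m-1)+r$, $k=j(m-1)+r$ to reach the same sum. (In that derivation the quantity $\frac{n-k}{m-1}$ is exactly your $j$, the number of $m$-parts, so the two computations are tracking the same statistic.) What your argument buys is brevity and transparency: it is a two-line bijective count needing no machinery. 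What the paper's approach buys is uniformity: the same Bell-polynomial technique is reused verbatim for Propositions~\ref{prop:1_mod_m} and~\ref{prop:greater=m}, where a direct count is less immediate, and it keeps all four enumeration results flowing from the single framework of Theorem~\ref{thm:coloredComp}. Your closing remark that the identity holds for all $n\ge 1$ (not just $n\ge m$) is also correct, since for $n<m$ only the $j=0$ term survives and gives $1$.
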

\begin{proof}
Using Theorem~\ref{thm:coloredComp} with the sequence $(w_n)$ defined by $w_1=w_m=1$ and $w_j=0$ for $j\not=1,m$, we get

\begin{align*}
 |\C_{1,m}(n)| &= \sum_{k=1}^n \frac{k!}{n!}B_{n, k} (1!, 0, \dotsc, m!, 0 \dots) \\
 &=  \sum_{k=1}^n \sum_{\substack{k_1+k_m=k \\ k_1+mk_m=n}} \frac{k!}{k_1! k_m!} \\
 &=  \sum_{i+mj=n} \frac{(i+j)!}{i! j!} =  \sum_{j=0}^{\lfloor \frac{n}{m}\rfloor} \frac{(n-(m-1)j)!}{(n-mj)! j!}
 = \sum_{j=0}^{\lfloor \frac{n}{m}\rfloor} \binom{n-(m-1)j}{j}.
\end{align*}
\end{proof}

In the next two propositions, we will discuss formulas for $|\C_{\equiv 1(m)}(n)|$ and $|\C_{\ge m}(n)|$. While these formulas can be easily derived from Theorem~\ref{thm:coloredComp} and basic properties of the partial Bell polynomials, we will prove them using elementary facts about compositions. Recall that the number of compositions of $n$ with exactly $k$ parts is given by $\binom{n-1}{k-1}$. Moreover, the number of weak compositions\footnote{In a weak composition, parts are allowed to be 0.} of $n$ with $k$ parts is $\binom{n+k-1}{k-1}$.

\begin{proposition} \label{prop:1_mod_m}
Let $m>1$ be an integer. Then
\begin{equation*}
\big|\C_{\equiv 1(m)}(n)\big|=\sum_{j=0}^{\lfloor \frac{n}{m}\rfloor} \binom{n-(m-1)j-1}{j} \;\text{ for } n\ge m.
\end{equation*}
\end{proposition}
\begin{proof}
Let $(j_1, \dotsc, j_k)$ be a composition of $n$ with parts that are congruent to 1 modulo $m$. Then
\begin{equation*}
 n = j_1+\dots+j_k = (p_1m+1)+\dots+(p_km+1),
\end{equation*}
where $p_1,\dots,p_k\ge 0$. This is possible if and only if 
\begin{equation*}
 n-k\equiv 0 \hspace{-5pt}\pmod{m} \;\text{ and }\; \frac{n-k}{m} = p_1+\dots +p_k.
\end{equation*}
Writing $n=qm+r$ and $k=jm+r$ with $0\le r < m$, we get $\frac{n-k}{m}=q-j$, and so the number of compositions of $n$ with $k$ parts $\equiv 1\!\pmod{m}$ is the same as the number of weak compositions of $q-j$ with $k$ parts: $\binom{q-j+k-1}{k-1} = \binom{q-j+k-1}{q-j} = \binom{q-j+jm+r-1}{q-j}$. Thus
\begin{align*}
\big|\C_{\equiv 1(m)}(n)\big|
 &= \sum_{j=0}^q \binom{q-j+jm+r-1}{q-j} \\
 &= \sum_{j=0}^q \binom{j+(q-j)m+r-1}{j} = \sum_{j=0}^{\lfloor \frac{n}{m}\rfloor} \binom{n -(m-1)j-1}{j}.
\end{align*}
\end{proof}

\begin{proposition} \label{prop:greater=m}
Let $m>1$ be an integer. Then
\begin{equation*}
  \big|\C_{\ge m}(n)\big| = \sum_{k=1}^{\lfloor \frac{n-1}{m-1}\rfloor} \binom{n-(m-1)k-1}{k-1} \;\text{ for } n\ge m.
\end{equation*}
\end{proposition}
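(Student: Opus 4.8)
The plan is to follow the template of the three previous propositions and apply Theorem~\ref{thm:coloredComp} to the indicator sequence $(w_n)$ defined by $w_n=1$ for $n\ge m$ and $w_n=0$ for $1\le n<m$. A $w$-color composition with these weights is exactly a composition into parts all of size at least $m$, each such part carrying a single color, so $W_n=\big|\C_{\ge m}(n)\big|$ and the $k$-part refinement gives
\[
 \big|\C_{\ge m}(n)\big| = \sum_{k=1}^{n} \frac{k!}{n!}\, B_{n,k}\big(\underbrace{0,\dots,0}_{m-1},\,m!,\,(m+1)!,\dots\big).
\]
The whole statement then reduces to evaluating the summand, and I would aim to show
\[
 \frac{k!}{n!}\, B_{n,k}\big(0,\dots,0,m!,(m+1)!,\dots\big) = \binom{n-(m-1)k-1}{k-1}.
\]

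For this evaluation I would use a shift identity to strip the $m-1$ leading zeros. Reindexing each admissible block size $j\ge m$ by $\ell=j-(m-1)$ turns the defining constraints $\sum_j j\,i_j=n$ and $\sum_j i_j=k$ into $\sum_\ell \ell\,i_\ell=n-(m-1)k$ and $\sum_\ell i_\ell=k$, while each factor $x_j/j!=1$ is preserved; this yields
\[
 B_{n,k}\big(0,\dots,0,m!,(m+1)!,\dots\big) = \frac{n!}{\big(n-(m-1)k\big)!}\, B_{n-(m-1)k,\,k}(1!,2!,3!,\dots).
\]
Then I would invoke the value $B_{N,k}(1!,2!,\dots)=\frac{N!}{k!}\binom{N-1}{k-1}$, which is itself Theorem~\ref{thm:coloredComp} applied to the all-ones weights (it counts ordinary compositions of $N$ into $k$ parts). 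With $N=n-(m-1)k$ the factorials cancel and the summand collapses to $\binom{n-(m-1)k-1}{k-1}$, as needed. A slicker alternative, if one prefers generating functions, is to note that $\sum_n B_{n,k}\,t^n/n! = \tfrac1{k!}\big(\sum_{j\ge m} t^j\big)^k = \tfrac1{k!}\,t^{mk}(1-t)^{-k}$, whence $\frac{k!}{n!}B_{n,k}=[t^{\,n-mk}](1-t)^{-k}=\binom{n-(m-1)k-1}{k-1}$ in one line.

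Finally I would reconcile the summation range. The summand $\binom{n-(m-1)k-1}{k-1}$ is nonzero precisely when $mk\le n$, and the stated upper limit $\lfloor\frac{n-1}{m-1}\rfloor$ merely guarantees that the top entry $n-(m-1)k-1$ stays nonnegative, so the extra terms with $\frac{n}{m}<k\le\lfloor\frac{n-1}{m-1}\rfloor$ contribute zero and may be included harmlessly. Summing over $k$ then gives the claimed formula. The main obstacle is not conceptual but bookkeeping: getting the reindexing in the shift identity exactly right (pinning down the precise Comtet reference and the transformed constraints) and checking that the declared range of summation agrees with the range on which the binomials are actually nonzero.
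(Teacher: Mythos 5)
Your proposal is correct and follows essentially the same route as the paper: apply Theorem~\ref{thm:coloredComp} to the indicator sequence of $\{m,m+1,\dots\}$, strip the leading zeros with the shift identity (Comtet's Eq.\ (3$\ell''$)) to reduce to $B_{n-(m-1)k,k}(1!,2!,\dots)$, and evaluate via Eq.\ (3h). The range bookkeeping you flag is handled in the paper simply by noting that $B_{n,k}(0,\dots,m!,\dots)=0$ when $(m-1)k\ge n$, which matches your observation that the extra terms vanish.
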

\begin{proof}
Let $(j_1, \dotsc, j_k)$ be a composition of $n$ with parts that are greater than or equal to $m$. Then
\begin{equation*}
 n = j_1+\dots+j_k = (i_1-1+m)+\dots+(i_k-1+m),
\end{equation*}
where $i_1,\dots,i_k\ge 1$. This is equivalent to the identity $n-(m-1)k=i_1+\dots+i_k$. Thus the number of compositions in $\C_{\ge m}(n)$ with $k$ parts is the same as the number of compositions of $n-(m-1)k$ into $k$ parts, which is $\binom{n-(m-1)k-1}{k-1}$.
\end{proof}

\medskip
As a consequence of propositions \ref{prop:p(d)}, \ref{prop:1_and_m}, \ref{prop:1_mod_m}, and \ref{prop:greater=m}, we get the following result.
\begin{theorem}
For every $d,\nu \in\mathbb{N}$, we have
\begin{equation*}
 P_\nu(d) = \left|\C_{1,d+1}\big((d+1)\nu - 1\big)\right| = \left|\C_{\equiv 1(d+1)}\big((d+1)\nu\big)\right| 
 = \left|\C_{\ge d+1}\big((d+1)\nu + d\big)\right|.
\end{equation*}
In other words, the set of $p(d)$-color compositions of $\nu$ is in one-to-one correspondence with the set of compositions of $(d+1)\nu - 1$ having only parts of size $1$ and $d+1$, the set of compositions of $(d+1)\nu$ having only parts of size congruent to $1$ modulo $d+1$, and the set of compositions of $(d+1)\nu + d$ having no parts of size less than $d+1$.
\end{theorem}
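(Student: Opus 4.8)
The plan is to evaluate each of the four quantities using the corresponding proposition and reduce them all to one common expression, namely $\sum_{j=0}^{\nu-1}\binom{(d+1)\nu - dj - 1}{j}$. Since Propositions~\ref{prop:p(d)}, \ref{prop:1_and_m}, \ref{prop:1_mod_m}, and \ref{prop:greater=m} already supply closed-form sums, the entire argument amounts to substituting the stated arguments ($m=d+1$ together with the indicated value of $n$), performing a single index shift in each sum, and checking that any ``extra'' terms introduced by the floor functions in the upper limits vanish.

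First I would treat $P_\nu(d)$. By Proposition~\ref{prop:p(d)}, $P_\nu(d) = \sum_{k=1}^{\nu}\binom{\nu + dk - 1}{\nu - k}$, and the substitution $j = \nu - k$ (so that $k$ runs from $\nu$ down to $1$ as $j$ runs from $0$ to $\nu-1$) converts this into $\sum_{j=0}^{\nu-1}\binom{(d+1)\nu - dj - 1}{j}$, which I take as the target form. Next I would bring the three restricted-composition counts to the same shape. For $\C_{1,d+1}$ with $n = (d+1)\nu - 1$, Proposition~\ref{prop:1_and_m} gives upper limit $\lfloor \tfrac{(d+1)\nu-1}{d+1}\rfloor = \nu - 1$ and summand $\binom{(d+1)\nu - 1 - dj}{j}$, which is precisely the target. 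For $\C_{\equiv 1(d+1)}$ with $n = (d+1)\nu$, Proposition~\ref{prop:1_mod_m} gives upper limit $\nu$ and summand $\binom{(d+1)\nu - dj - 1}{j}$; here the $j=\nu$ term equals $\binom{\nu-1}{\nu}=0$, so the sum again reduces to the target. For $\C_{\ge d+1}$ with $n = (d+1)\nu + d$, Proposition~\ref{prop:greater=m} gives $\sum_{k\ge 1}\binom{(d+1)\nu + d - dk - 1}{k-1}$, and the shift $k = j+1$ produces summand $\binom{(d+1)\nu - dj - 1}{j}$, matching the target once more.

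The only real care needed---and the single point where the argument could go wrong---is the treatment of the upper limits coming from the floor functions. Rather than computing each floor exactly, I would observe that the common summand $\binom{(d+1)\nu - dj - 1}{j}$ vanishes as soon as $j>\nu-1$ (since then its lower index exceeds the upper), so each sum automatically truncates at $j=\nu-1$ irrespective of its nominal upper bound, \emph{provided} that bound is at least $\nu-1$. Verifying this last inequality for each proposition is a short estimate; the only slightly nonobvious case is $\C_{\ge d+1}$, where after the shift $k=j+1$ one needs $\lfloor \tfrac{(d+1)\nu + d - 1}{d}\rfloor \ge \nu$, which holds since $(d+1)\nu + d - 1 \ge d\nu + d$ for $\nu\ge 1$. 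This floor bookkeeping constitutes the main---though modest---obstacle.

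Once all four expressions are seen to equal $\sum_{j=0}^{\nu-1}\binom{(d+1)\nu - dj - 1}{j}$, the chain of equalities in the statement follows immediately, and with it the asserted existence of one-to-one correspondences among the four sets. I would note that the argument is purely computational and makes no use of the combinatorial structure of the compositions themselves; the explicit bijections realizing these equalities are deferred to Section~\ref{sec:bijections}.
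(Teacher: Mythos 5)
Your proposal is correct and follows exactly the route the paper intends: the theorem is stated there as an immediate consequence of Propositions~\ref{prop:p(d)}, \ref{prop:1_and_m}, \ref{prop:1_mod_m}, and \ref{prop:greater=m}, and your substitutions, index shifts, and handling of the floor bounds supply precisely the omitted verification that all four sums equal $\sum_{j=0}^{\nu-1}\binom{(d+1)\nu-dj-1}{j}$.
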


\medskip
We finish this section with an interesting observation made by the referee. Since 
\[ P_\nu(d) = \binom{\nu+dk-1}{\nu-k} = \binom{\nu-k+(d+1)k-1}{(d+1)k-1}, \] 
we can also conclude that there are as many $p(d)$-color compositions of $\nu$ with $k$ parts as there are uncolored weak compositions of $\nu-k$ with $(d+1)k$ parts. 

\section{Combinatorial bijections}
\label{sec:bijections}

Based on bijections given by Shapcott \cite{Shapcott13} for $n$-color compositions, in this section, we will provide bijective maps between the set of $p(d)$-color compositions of $\nu$ and the sets $\C_{1,d+1}\big((d+1)\nu-1\big)$, $\C_{\equiv 1(d+1)}\big((d+1)\nu\big)$, and $\C_{\ge d+1}\big((d+1)\nu+d\big)$. 

\smallskip
For this purpose, we fix $d\in\mathbb{N}$ and consider the sets
\begin{align*}
  A_k(\nu) &=\{\text{$p(d)$-color compositions of $\nu$ with $k$ parts}\}, \\
  B_k(\nu) &=\{\text{binary strings of length $\nu+dk-1$ with exactly $(d+1)k-1$ ones}\}.
\end{align*}
\begin{proposition} \label{prop:binaryMap}
For any fixed $d$, there is a bijective map $T:A_k(\nu)\to B_k(\nu)$.
\end{proposition}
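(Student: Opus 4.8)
The plan is to build $T$ as an explicit block-encoding and to recover its inverse through a counting invariant; as a preliminary sanity check I would note that $|B_k(\nu)|=\binom{\nu+dk-1}{(d+1)k-1}=\binom{\nu+dk-1}{\nu-k}$, which matches the count $|A_k(\nu)|=\binom{\nu+dk-1}{\nu-k}$ from Proposition~\ref{prop:p(d)}, so a bijection can exist. First I would record the coincidence that makes a part-by-part encoding possible: a part of size $n$ carries $\binom{n+d-1}{d}$ colors, and this is exactly the number of binary strings of length $n+d-1$ with precisely $d$ ones. Accordingly, I fix for each $n$ a bijection $\phi_n$ from the colors of a size-$n$ part onto the length-$(n+d-1)$ binary strings having exactly $d$ ones; the natural choice sends a color, viewed as a weak composition $(c_1,\dots,c_n)$ of $d$, to the stars-and-bars word $1^{c_1}01^{c_2}0\cdots01^{c_n}$, which has $d$ ones and $n-1$ zeros.

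Given a composition in $A_k(\nu)$ with successive colored parts of sizes $n_1,\dots,n_k$ and colors $\gamma_1,\dots,\gamma_k$, I would set
\[
 T=\phi_{n_1}(\gamma_1)\,1\,\phi_{n_2}(\gamma_2)\,1\cdots1\,\phi_{n_k}(\gamma_k),
\]
i.e.\ concatenate the blocks and insert a single $1$ between consecutive blocks. Then I check $T\in B_k(\nu)$: its length is $\sum_i(n_i+d-1)+(k-1)=\nu+dk-1$ and its number of ones is $\sum_i d+(k-1)=(d+1)k-1$, as required, while the $\nu-k$ zeros all sit inside blocks.

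For bijectivity I would construct the inverse by parsing, the key invariant being that every block contributes exactly $d$ ones whereas every separator contributes one extra one. Hence, reading the ones of a word in $B_k(\nu)$ from left to right, the $(d+1)$-th, $2(d+1)$-th, \dots, $(k-1)(d+1)$-th ones are precisely the separators, and cutting the word at these forced positions splits it into $k$ substrings each carrying exactly $d$ ones. A one-line count confirms consistency: after the last separator there remain $(d+1)k-1-(k-1)(d+1)=d$ ones, so the final block has the right one-count and needs no trailing separator. From each block I read off the part size (its number of zeros plus $1$) and the color (via $\phi_{n_i}^{-1}$); since the $\nu-k$ zeros all lie in blocks, the recovered sizes sum to $\nu$, and the original colored composition is recovered uniquely.

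The step I expect to be the main obstacle is proving that this parsing is genuinely unambiguous, i.e.\ that the zeros adjacent to a separator are attributed to the correct neighboring block. This is exactly where the ``$d$ ones per block'' invariant does the work: because the separators are pinned down purely by counting ones, the cut points become forced positions of the word and no zero can be reassigned between adjacent blocks. I would finish by verifying that the forward map and the parsing map are mutually inverse, remarking that the case $k=1$ (no separators) is immediate and that $d\ge1$ ensures each block contains at least one of the ones used to locate the separators.
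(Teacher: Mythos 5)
Your proof is correct and takes essentially the same route as the paper: encode each colored part of size $n_i$ as a binary word of length $n_i+d-1$ with exactly $d$ ones, concatenate these blocks with a single separator $1$ between consecutive parts, and invert by locating the separators as the $(d+1)$-th, $2(d+1)$-th, \dots\ ones, which forces the cut positions and hence the blocks. The only cosmetic difference is your choice of the color-to-word bijection (stars-and-bars on weak compositions of $d$) in place of the paper's explicit ranking via the combinatorial number system in Lemma~\ref{lem:binaryCount}; since colors are abstract labels, either choice works.
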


Before we prove this proposition, we need the following lemma.
\begin{lemma} \label{lem:binaryCount}
Let $d\in\mathbb{N}$ be fixed. For $n\in\mathbb{N}$, $n\ge d$, there is a bijection $\phi_d$ from $\left\{1,2,\dots,\binom{n}{d}\right\}$ to the set of binary words of length $n$ having exactly $d$ ones. 
\end{lemma}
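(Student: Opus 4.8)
The plan is to observe first that the set of binary words of length $n$ with exactly $d$ ones has cardinality $\binom{n}{d}$, since such a word is completely determined by the $d$-element subset of $\{1,\dots,n\}$ recording the positions of its ones. Hence a bijection $\phi_d$ as in Lemma~\ref{lem:binaryCount} exists purely on cardinality grounds, and the genuine content is to produce an \emph{explicit} and \emph{canonical} one. This matters because the map $T$ of Proposition~\ref{prop:binaryMap} will be assembled from several such blocks, so I want $\phi_d$ to be computable and tied to a fixed ordering. Accordingly, I would equip the target set with the lexicographic order on words and take $\phi_d$ to be the unique order isomorphism onto $\{1,2,\dots,\binom{n}{d}\}$ with its natural order.

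To make this concrete and to verify bijectivity, I would argue by induction on $n$, treating all $d$ with $1\le d\le n$ simultaneously and using Pascal's rule $\binom{n}{d}=\binom{n-1}{d}+\binom{n-1}{d-1}$. The base case $n=d$ is immediate: there is a single word $1\cdots1$, and I set $\phi_d(1)=1^{d}$ (with the convention $\phi_0^{(m)}(1)=0^{m}$ handling the all-zeros words that appear when $d$ drops to $0$). For the inductive step I split the index set $\{1,\dots,\binom{n}{d}\}$ into the initial block $\{1,\dots,\binom{n-1}{d}\}$ and the terminal block of size $\binom{n-1}{d-1}$. On the first block I prepend a $0$, setting $\phi_d(i)=0\,\phi_d^{(n-1)}(i)$, where $\phi_d^{(n-1)}$ is the inductively built bijection for length $n-1$; on the second block I prepend a $1$, setting $\phi_d(i)=1\,\phi_{d-1}^{(n-1)}\!\big(i-\binom{n-1}{d}\big)$. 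Each block maps bijectively onto the length-$n$ words beginning with $0$ (respectively $1$), and these two families partition the target set, so $\phi_d$ is a bijection; since words starting with $0$ precede those starting with $1$, it is precisely the lexicographic order isomorphism.

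As an equivalent closed form I would record the combinatorial number system: writing the index as $N=i-1$ with $0\le N<\binom{n}{d}$ in its unique combinadic expansion $N=\binom{c_d}{d}+\binom{c_{d-1}}{d-1}+\cdots+\binom{c_1}{1}$ with $n>c_d>c_{d-1}>\cdots>c_1\ge 0$, the word $\phi_d(i)$ places its ones in the positions read off from the $c_j$. This yields $\phi_d$ and its inverse without recursion, which is convenient when composing blocks to build $T$.

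The main obstacle here is bookkeeping rather than mathematics: there is no deep obstruction, because the two sets are equinumerous by construction. The points needing care are aligning the index ranges in the inductive step with Pascal's rule, so that the two blocks have the correct sizes and their union is a disjoint partition, and fixing a single ordering convention so that the later assembly of $T$ is unambiguous. I expect the only mildly fiddly step to be checking that the recursive map coincides with the lexicographic order isomorphism, and this follows at once from the fact that prepending $0$ produces lexicographically smaller words than prepending $1$.
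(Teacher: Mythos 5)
Your proposal is correct and takes essentially the same approach as the paper: the paper's algorithm for $\phi_d$ (find $p_1>\dots>p_d$ with $m-1=\sum_{j}\binom{p_j}{d-j+1}$ and place ones at positions $p_j+1$ from the right) is exactly the combinatorial number system you record as the closed form, and the paper's observation that $m<k$ implies $\sum_j 2^{p_j}<\sum_j 2^{q_j}$ is the same order-isomorphism property you establish. Your Pascal-rule induction is simply a recursive presentation of that same bijection.
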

\begin{proof}
Let $m\in\mathbb{N}$ be such that $m\le \binom{n}{d}$. Using the fact that $\binom{n}{d} = 1+\sum_{j=1}^{d} \binom{n-j}{d-j+1}$, we construct a binary word $w=\phi_d(m)$, having exactly $d$ ones, as follows:
\begin{enumerate}
\itemsep4pt
\item[$\circ$] Let $m_1=m-1$ and find $p_1$ such that $\binom{p_1}{d}\le m_1 < \binom{p_1+1}{d}$;
\item[$\circ$] for every $2\le j\le d$, let $m_j=m_{j-1}-\binom{p_{j-1}}{d-j+2}$ and find $p_{j}$ such that 
\[ \binom{p_{j}}{d-j+1}\le m_j < \binom{p_{j}+1}{d-j+1}; \]
\item[$\circ$] starting from the right, make the binary word $w$ of length $n$ having a 1 at each position $p_j+1$ for $j=1,\dots,d$, and adding as many zeros to the left as necessary.
\end{enumerate}
For example, for $n=8$, $d=3$, and $m=24$, we get $p_1=6$, $p_2=3$, and $p_3=0$, leading to the binary word $\phi_3(24)=01001001$.

Note that if $p_1+1,\dots,p_d+1$ are the 1-positions associated with $m$ and $q_1+1,\dots,q_d+1$ are the positions associated with $k$, then $m<k$ implies $\sum_{j=1}^d 2^{p_j}<\sum_{j=1}^d 2^{q_j}$. In other words, the corresponding words $\phi_d(m)$ and $\phi_d(k)$ are distinct, hence $\phi_d$ is one-to-one.
 
Given a binary word $w$ of length $n$, having $d$ ones, the inverse $m=\phi_d^{-1}(w)$ can be found as follows:
\begin{enumerate}
\itemsep4pt
\item[$\circ$] Label all of the characters of $w$ from right to left as $0,1,2,\dots,n-1$;
\item[$\circ$] label the 1's in $w$ from right to left as $1,2,\dots,d$ and record their positions as $p_1+1,\dots,p_d+1$ from left to right;
\item[$\circ$] define $m=1+\sum_{j=1}^d \binom{p_j}{d-j+1}$, with the convention that $\binom{a}{b}=0$ if $a<b$.  
\end{enumerate}
For example, for the binary word $w=01001001$, we get

\begin{center}
\begin{tikzpicture}
\draw node at (2,1) {%
\begin{tabular}{rcccccccc}
 &\sz 7 &\sz 6 &\sz 5 &\sz 4 &\sz 3 &\sz 2 &\sz 1 &\sz 0 \\[-2pt]
 $w =$\!\! &\bf 0 &\bf 1 &\bf 0 &\bf 0 &\bf 1 &\bf 0 &\bf 0 &\bf 1 \\[-3pt]
 & &\sz 3 & & &\sz 2 & & &\sz 1 \\
\end{tabular}};%
\draw[rounded corners=2mm, very thin] (0.8,0.35) rectangle (1.25,1.6);
\draw[rounded corners=2mm, very thin] (2.5,0.35) rectangle (2.95,1.6);
\draw[rounded corners=2mm, very thin] (4.23,0.35) rectangle (4.68,1.6);
\end{tikzpicture}
\end{center}
and therefore $\phi_3^{-1}(01001001)=1+\binom{6}{3}+\binom{3}{2}+\binom{0}{1} = 1+20+3+0 = 24$.
\end{proof}

\medskip
With the help of $\phi_d$ and $\phi_d^{-1}$ we now proceed to prove the above proposition.

\begin{proof}[Proof of Proposition~\ref{prop:binaryMap}]
Let $\alpha=(n_1^{c_1},\dots,n_k^{c_k})$ be an element of $A_k(\nu)$, where each $n_i^{c_i}$ is a part of size $n_i$ with color $1\le c_i\le \binom{n_i+d-1}{d}$. For every part $n_i^{c_i}$ let $w_i=\phi_d(c_i)$ be the binary word of length $n_i+d-1$ obtained through the algorithm from Lemma~\ref{lem:binaryCount}. We then concatenate the $k$ binary words associated with each part of the composition $\alpha$, adding an extra 1 between consecutive parts, to create a binary string $\beta=T(\alpha)$ of length
\begin{equation*}
 (k-1) + \sum_{i=1}^k (n_i+d-1) =  k-1+\nu+(d-1)k = \nu+dk-1
\end{equation*}
with exactly $(k-1)+dk = (d+1)k-1$ ones. In other words, $\beta$ is an element of $B_k(\nu)$. 

For example, for $d=2$ each part $n$ takes on $d_n(2)=\binom{n+1}{2}$ colors, so for $n=1,2,3$ the above map $T(\alpha)=\beta$ gives the following:

\begin{center}
{\small
\begin{tabular}{@{\hspace{1em}}c |@{\hspace{1em}}c}
$d_n(2)$-color comp. & binary word \\ \hline\hline
& \\[-8pt]
$(1_1) $& 11
\end{tabular}
\vspace{1em}

\begin{tabular}{@{\hspace{1em}}c |@{\hspace{1em}}c}
$d_n(2)$-color comp. & binary word \\ \hline\hline
& \\[-8pt]
$(2_1)$ & 011 \\
$(2_2)$ & 101 \\
$(2_3)$ & 110 \\[3pt]
$(1_1,1_1)$ & 11{\color{red}1}11
\end{tabular}
\vspace{1em}

\begin{tabular}{@{\hspace{1em}}c |@{\hspace{1em}}c}
$d_n(2)$-color comp. & binary word \\ \hline\hline
& \\[-8pt]
$(3_1)$ & 0011 \\
$(3_2)$ & 0101 \\
$(3_3)$ & 0110 \\
$(3_4)$ & 1001 \\
$(3_5)$ & 1010 \\
$(3_6)$ & 1100 \\[3pt]
$(2_1,1_1)$ & 011{\color{red}1}11 \\
$(2_2,1_1)$ & 101{\color{red}1}11 \\
$(2_3,1_1)$ & 110{\color{red}1}11 \\[3pt]
$(1_1,2_1)$ & 11{\color{red}1}011 \\
$(1_1,2_2)$ & 11{\color{red}1}101 \\
$(1_1,2_3)$ & 11{\color{red}1}110 \\[3pt]
$(1_1,1_1,1_1)$ & 11{\color{red}1}11{\color{red}1}11
\end{tabular}
}
\end{center}

\smallskip
The above map $T$ is reversible. Given a binary string $\beta$ in $B_k(\nu)$, we create a composition in $A_k(\nu)$ by means of the following inductive algorithm:
\begin{enumerate}
\itemsep1pt
\item[$\circ$] Write $\beta$ as $w_1 1 \beta'$, where $w_1$ is a binary string with exactly $d$ ones;
\item[$\circ$] let $c_1=\phi_d^{-1}(w_1)$ and let $n_1^{c_1}$ be the part of size $n_1=\length(w_1)$ with color $c_1$;
\item[$\circ$] remove the one after $w_1$ and repeat the algorithm with $\beta'$ until it has only $d$ ones.
\end{enumerate}
Since every such $\beta\in B_k(\nu)$ has exactly $(d+1)k-1$ ones, the above algorithm will create $k$ parts with $n_1+\dots+n_k=\nu$, leading to a composition $\alpha=(n_1^{c_1},\dots,n_k^{c_k})$ in $A_k(\nu)$ such that $T(\alpha)=\beta$.
\end{proof}

Let $\mathscr{A}_{p(d)}(\nu)=\bigcup_{k=1}^{\nu} A_k(\nu)$ be the set of $p(d)$-color compositions of $\nu$.

\subsection*{Map $\mathscr{A}_{p(d)}(\nu) \to \C_{1,d+1}\big((d+1)\nu-1\big)$}
For $\alpha=(n_1^{c_1},\dots,n_k^{c_k})$ in $\mathscr{A}_{p(d)}(\nu)$, let $\beta=T(\alpha)$ be the binary string of length $\nu+dk-1$ from Proposition~\ref{prop:binaryMap}, having exactly $(d+1)k-1$ ones and $\nu-k$ zeros. If we treat every character 1 in $\beta$ as a separate part and replace every 0 by $d+1$, we get a unique composition of $(d+1)k-1 + (\nu-k)(d+1) = (d+1)\nu-1$, having only parts of size 1 and $d+1$. This map is clearly a bijection. 

\enlargethispage{1cm}
For example, for $d=2$ and $\nu=3$, we get

\medskip
\begin{center}
{\small
\begin{tabular}{@{\hspace{1em}}c |@{\hspace{1em}}c |@{\hspace{1em}}c}
$d_n(2)$-color comp. & binary word & comp.\ in $\C_{1,3}(8)$ \\ \hline\hline
&& \\[-8pt]
$(3_1)$ & 0011 & (3,3,1,1)\\
$(3_2)$ & 0101 & (3,1,3,1)\\
$(3_3)$ & 0110 & (3,1,1,3)\\
$(3_4)$ & 1001 & (1,3,3,1)\\
$(3_5)$ & 1010 & (1,3,1,3)\\
$(3_6)$ & 1100 & (3,3,1,1)\\[3pt]
$(2_1,1_1)$ & 011111 & (3,1,1,1,1,1)\\
$(2_2,1_1)$ & 101111 & (1,3,1,1,1,1)\\
$(2_3,1_1)$ & 110111 & (1,1,3,1,1,1)\\[3pt]
$(1_1,2_1)$ & 111011 & (1,1,1,3,1,1)\\
$(1_1,2_2)$ & 111101 & (1,1,1,1,3,1)\\
$(1_1,2_3)$ & 111110 & (1,1,1,1,1,3)\\[3pt]
$(1_1,1_1,1_1)$ & 11111111 &  (1,1,1,1,1,1,1,1)
\end{tabular}
}
\end{center}

\subsection*{Map $\mathscr{A}_{p(d)}(\nu) \to \C_{\equiv 1(d+1)}\big((d+1)\nu\big)$}
For $\alpha=(n_1^{c_1},\dots,n_k^{c_k})$ in $\mathscr{A}_{p(d)}(\nu)$, let $\beta=T(\alpha)$ be the binary string of length $\nu+dk-1$ from Proposition~\ref{prop:binaryMap}, having exactly $(d+1)k-1$ ones and $\nu-k$ zeros. 
Using the 1's in $\beta$ as separators, we now construct a composition as follows: To the left and right of every 1 in the binary string $\beta$, replace a string of $j$ zeros with a string of $(d+1)j+1$ zeros, which then represents a part of size $(d+1)j+1$. Since there are $(d+1)k-1$ separators, the constructed composition has $(d+1)k$ parts and the new total number of zeros is $(d+1)(\nu-k)+(d+1)k=(d+1)\nu$.

In conclusion, the above (clearly reversible) construction gives a composition of $(d+1)\nu$ having only parts of size congruent to $1$ modulo $d+1$.

For example, for $d=2$ and $\nu=3$, we get

\smallskip
\begin{center}
{\small
\begin{tabular}{@{\hspace{1em}}c |@{\hspace{1em}}c |@{\hspace{1em}}c |@{\hspace{1em}}c}
$d_n(2)$-color comp. & binary word & zeros as parts & comp.\ in $\C_{\equiv 1(3)}(9)$ \\ \hline\hline
&&& \\[-8pt]
$(3_1)$ & 0011 & 00000001010 & (7,1,1)\\
$(3_2)$ & 0101 & 00001000010 & (4,4,1)\\
$(3_3)$ & 0110 & 00001010000 & (4,1,4)\\
$(3_4)$ & 1001 & 01000000010 & (1,7,1)\\
$(3_5)$ & 1010 & 01000010000 & (1,4,4)\\
$(3_6)$ & 1100 & 01010000000 & (1,1,7)\\[3pt]
$(2_1,1_1)$ & 011111 & 00001010101010 & (4,1,1,1,1,1) \\
$(2_2,1_1)$ & 101111 & 01000010101010 & (1,4,1,1,1,1)\\
$(2_3,1_1)$ & 110111 & 01010000101010 & (1,1,4,1,1,1)\\[3pt]
$(1_1,2_1)$ & 111011 & 01010100001010 & (1,1,1,4,1,1)\\
$(1_1,2_2)$ & 111101 & 01010101000010 & (1,1,1,1,4,1)\\
$(1_1,2_3)$ & 111110 & 01010101010000 & (1,1,1,1,1,4)\\[3pt]
$(1_1,1_1,1_1)$ & 11111111 & 01010101010101010 &  (1,1,1,1,1,1,1,1,1)
\end{tabular}
}
\end{center}

\subsection*{Map $\mathscr{A}_{p(d)}(\nu) \to \C_{\ge d+1}\big((d+1)\nu+d\big)$}
For $\alpha=(n_1^{c_1},\dots,n_k^{c_k})$ in $\mathscr{A}_{p(d)}(\nu)$, let $\beta=T(\alpha)$ be the binary string of length $\nu+dk-1$ from Proposition~\ref{prop:binaryMap}, having exactly $(d+1)k-1$ ones and $\nu-k$ zeros. 
Using now the 0's in $\beta$ as separators, we construct a composition as follows: To the left and right of every 0 in the binary string $\beta$, replace a string of $j$ ones with a string of $j+d+1$ ones, which then represents a part of size $j+d+1$. Since there are $\nu-k$ separators, the constructed composition has $\nu-k+1$ parts and the new total number of ones is $(d+1)k-1+(d+1)(\nu-k+1) = (d+1)\nu+d$.

Thus the above reversible construction gives a composition of $(d+1)\nu+d$ having no parts of size less than $d+1$.

For example, for $d=2$ and $\nu=3$, we get

\smallskip
\begin{center}
{\small
\begin{tabular}{@{\hspace{1em}}c |@{\hspace{1em}}c |@{\hspace{1em}}c |@{\hspace{1em}}c}
$d_n(2)$-color comp. & binary word & ones as parts & comp.\ in $\C_{\ge 3}(11)$ \\ \hline\hline
&&& \\[-8pt]
$(3_1)$ & 0011 & 1110111011111 & (3,3,5)\\
$(3_2)$ & 0101 & 1110111101111 & (3,4,4)\\
$(3_3)$ & 0110 & 1110111110111 & (3,5,3)\\
$(3_4)$ & 1001 & 1111011101111 & (4,3,4)\\
$(3_5)$ & 1010 & 1111011110111 & (4,4,3)\\
$(3_6)$ & 1100 & 1111101110111 & (5,3,3)\\[3pt]
$(2_1,1_1)$ & 011111 & 111011111111 & (3,8) \\
$(2_2,1_1)$ & 101111 & 111101111111 & (4,7)\\
$(2_3,1_1)$ & 110111 & 111110111111 & (5,6)\\[3pt]
$(1_1,2_1)$ & 111011 & 111111011111 & (6,5)\\
$(1_1,2_2)$ & 111101 & 111111101111 & (7,4)\\
$(1_1,2_3)$ & 111110 & 111111110111 & (8,3)\\[3pt]
$(1_1,1_1,1_1)$ & 11111111 & 11111111111 & (11)
\end{tabular}
}
\end{center}

\medskip
\subsection*{Acknowledgement}
We would like to thank the referee for the thorough report and the very useful comments that we used to improve Section~\ref{sec:enumeration}.



\begin{thebibliography}{99}
\bibitem{ABCM14}  M.~Abrate, S.~Barbero, U.~Cerruti, N.~Murru, Colored compositions, invert operator and elegant compositions with the ``black tie'', \emph{Discrete Math.} \textbf{335} (2014), 1--7.
%
\bibitem{BernsteinSloane} M.~Bernstein and N. J. A.~Sloane, Some canonical sequences of integers, {\it Linear Algebra Appl.} \textbf{226/228} (1995), 57--72. 
%
\bibitem{Comtet} L.~Comtet, \emph{Advanced Combinatorics: The Art of Finite and Infinite Expansions}, D. Reidel Publishing Co., Dordrecht, 1974.
%
\bibitem{Dedrickson} C.~R.~Dedrickson, ``Compositions, Bijections, and Enumerations,'' (2012), Electronic Theses \& Dissertations, Paper 17.
%
\bibitem{Eger15a} S.~Eger, Some Elementary Congruences for the Number of Weighted Integer Compositions, \emph{J.\ Integer Seq.} \textbf{18} (2015), no. 4, Article 15.4.1, 18 pp.
%
\bibitem{Eger15b} S.~Eger, Identities for partial Bell polynomials derived from identities for weighted integer compositions, \emph{Aequationes Math.} \textbf{90} (2016), no. 2, 299--306.
%
\bibitem{HM10} S.~Heubach and T.~Mansour, \emph{Combinatorics of Compositions and Words}, Discrete Mathematics and its Applications (Boca Raton), CRC Press, Boca Raton, FL, 2010.
%
\bibitem{HL68} V.~E.~Hoggatt, D.~A.~Lind, Fibonacci and binomial properties of weighted compositions, \emph{J. Combin. Theory} \textbf{4} (1968), 121--124.
%
\bibitem{MW61} L.~Moser and E.~L.~Whitney, Weighted compositions, \emph{Canad. Math. Bull.} \textbf{4} (1961), 39--43.
%
\bibitem{Shapcott13} C.~Shapcott, New bijections from $n$-color compositions, \emph{J. Comb.} \textbf{4} (2013), no. 3, 373--385.
\end{thebibliography}
\end{document}